\documentclass[12pt]{article}
\usepackage{amsmath, amssymb, amsfonts}
\usepackage{amsthm}
\usepackage[utf8]{inputenc}
\usepackage{csquotes}
\usepackage{geometry}
\usepackage{adjustbox}
\usepackage{graphicx}
\usepackage{verbatim}
\usepackage{array}
\usepackage{enumitem}
\usepackage{booktabs}
\usepackage{hyperref}
\usepackage{url}
\usepackage[capitalize]{cleveref}
\usepackage{tikz}

\geometry{a4paper, margin=2.5cm}

\title{Wave Arithmetic: A Smooth Integral Representation of Number Theory}
\author{Stanislav Semenov \\
\href{mailto:stas.semenov@gmail.com}{stas.semenov@gmail.com} \\
\href{https://orcid.org/0000-0002-5891-8119}{ORCID: 0000-0002-5891-8119}}
\date{April 21, 2025}

\theoremstyle{definition}
\newtheorem{definition}{Definition}[section]

\newtheorem{example}{Example}[section]

\theoremstyle{plain}
\newtheorem{axiom}{Axiom}[section]
\newtheorem{theorem}[definition]{Theorem}

\newtheorem{proposition}[definition]{Proposition}

\newtheorem{hypothesis}[definition]{Hypothesis}

\theoremstyle{remark}
\newtheorem*{remark}{Remark}

\begin{document}

\maketitle

\begin{abstract}
We introduce \textit{Wave Arithmetic}, a smooth analytical framework in which natural, integer, and rational numbers are represented not as discrete entities, but as integrals of smooth, compactly supported or periodic kernel functions. In this formulation, each number arises as the accumulated amplitude of a structured waveform---an interference pattern encoded by carefully designed kernels. Arithmetic operations such as addition, multiplication, and exponentiation are realized as geometric and tensorial constructions over multidimensional integration domains. Rational numbers emerge through amplitude scaling, and negative values through sign inversion, preserving all classical arithmetic identities within a continuous and differentiable structure. This representation embeds number theory into the realm of smooth analysis, enabling new interpretations of primality, factorization, and divisibility as geometric and spectral phenomena. Beyond technical formulation, \textit{Wave Arithmetic} proposes a paradigm shift: numbers as the collapsed states of harmonic processes---analytic resonances rather than atomic symbols.
\end{abstract}

\subsection*{Mathematics Subject Classification}
03F60 (Constructive and recursive analysis), 26E40 (Constructive analysis), 03F03 (Proof theory and constructive mathematics)

\subsection*{ACM Classification}
F.4.1 Mathematical Logic, F.1.1 Models of Computation

\section*{Introduction}

Classical arithmetic is fundamentally discrete. Natural numbers are atomic symbols; operations such as addition and multiplication are defined axiomatically or recursively, and numerical identities are established through symbolic manipulation. While this framework underpins modern mathematics and computation, it lacks smoothness, continuity, and structural compatibility with analysis, geometry, and signal theory. In particular, the transition from arithmetic to analysis typically requires external embedding---through step functions, distributions, or real-number codings---rather than an intrinsic reformulation of arithmetic itself~\cite{rudin1976principles,folland1999real}.

This work introduces \textit{Wave Arithmetic}---a smooth, analytic framework in which numbers are expressed as integrals of carefully constructed kernel functions, such as compactly supported bumps or periodic oscillators. In this model, a number is not a symbol, but the accumulated amplitude of a waveform. Arithmetic operations become tensorial or geometric constructions over multidimensional integration domains. For example, addition corresponds to one-dimensional integral summation, multiplication to two-dimensional convolution-like grid integration, and exponentiation to multidimensional tensor product integration. Rational numbers are represented via amplitude scaling, and negative numbers via signed inversion of the kernel.

The motivation is twofold. First, to construct an analytic foundation of arithmetic that is differentiable, smooth, and compatible with tools from harmonic analysis~\cite{stein2003fourier,gowers2001fourier}, geometry, and signal processing. Second, to reinterpret the notion of number not as a primitive static object, but as the emergent result of harmonic accumulation~\cite{stein2003fourier,berry1999riemann} and geometric configuration---a waveform collapsed into scalar identity. This paradigm shift opens the door to new analytic diagnostics for properties such as primality, divisibility, and factorization, all recast as structural or spectral features of the associated kernel integrals.

Compared to existing approaches, \textit{Wave Arithmetic} differs fundamentally in both construction and philosophy:
\begin{itemize}
  \item In contrast to recursive or symbolic arithmetic (Peano, Church, lambda-calculus), our model operates on continuous, smooth functions with localized or periodic structure.
  \item Unlike computable analysis or real arithmetic, which embeds integers as reals or sequences, we preserve exact arithmetic identities at the level of integrals.
  \item Unlike mollification or regularization techniques, where smooth functions approximate discontinuous behavior, here the smooth function \emph{is} the number---not an approximation, but its analytical instantiation.
  \item Unlike Fourier or wavelet decompositions used to analyze existing functions, we construct numbers themselves as harmonic superpositions.
\end{itemize}

The model preserves all fundamental arithmetic laws through integral and tensor identities, while introducing analytic notions such as \emph{analytic primality}, \emph{residual flattening error}, and \emph{spectral rigidity}. These concepts enable a novel interpretation of number-theoretic properties through the lens of multidimensional geometry and interference patterns~\cite{connes1998trace,berry1999riemann}.

Ultimately, \textit{Wave Arithmetic} proposes that arithmetic is not merely a symbolic calculus, but a geometric and harmonic phenomenon: numbers emerge as resonant accumulations of structured energy. The implications extend beyond theoretical reformulation—they suggest new tools for analytical number theory, signal-based cryptography, smooth numerical modeling, and even philosophical reinterpretations of numerical identity.

\section{Basic Definitions and Analytic Constructions}
\label{sec:basic-definitions}

We construct an arithmetic model of \( \mathbb{N} \), \( \mathbb{Z} \), and \( \mathbb{Q} \) based on integrals of smooth, compactly supported functions. Each number is represented as the integral of one or more bump functions, with arithmetic operations defined geometrically through dimension-aligned integration.

\subsection{Base Kernel Function}
\label{subsec:base-kernel}

Let \( \phi(x) \in C^\infty(\mathbb{R}) \) be a smooth bump function supported on \( \left[-\tfrac{1}{2}, \tfrac{1}{2}\right] \), centered at 0 and normalized by:
\[
\int_{\mathbb{R}} \phi(x)\,dx = 1
\]
Define the shifted version:
\[
\phi_k(x) := \phi(x - (k - 1)), \quad k \in \mathbb{Z}
\]
Each \( \phi_k \) has support in \( [k - \tfrac{3}{2}, k - \tfrac{1}{2}] \).

\subsection{Analytic Natural Numbers}
\label{subsec:natural-numbers}

We define the analytic representation of a natural number \( n \in \mathbb{N} \) as:
\[
\mathbb{N}_a(n) := \sum_{k=1}^n \int_{\mathbb{R}} \phi_k(x)\,dx = n
\]

\subsection{Addition}
\label{subsec:addition}

Addition corresponds to 1D extension along the \( x \)-axis:
\[
\mathbb{N}_a(a + b) := \sum_{k=1}^{a+b} \int_{\mathbb{R}} \phi_k(x)\,dx = a + b
\]

\subsection{Multiplication}
\label{subsec:multiplication}

Define the 2D kernel as:
\[
\phi_{i,j}(x,y) := \phi(x - i + 1) \cdot \phi(y - j + 1)
\]
Then:
\[
\mathbb{N}_a(a \cdot b) := \sum_{i=1}^a \sum_{j=1}^b \iint_{\mathbb{R}^2} \phi_{i,j}(x,y)\,dx\,dy = ab
\]

\subsection{Exponentiation}
\label{subsec:exponentiation}

Let \( b \in \mathbb{N} \). Define the \( b \)-dimensional kernel:
\[
\phi_{\vec{i}}(\vec{x}) := \prod_{k=1}^b \phi(x_k - i_k + 1), \quad \vec{i} \in [1,a]^b
\]
Then:
\[
\mathbb{N}_a(a^b) := \sum_{\vec{i} \in [1,a]^b} \int_{\mathbb{R}^b} \phi_{\vec{i}}(\vec{x})\,d\vec{x} = a^b
\]

\subsection{Rational Numbers, Zero, Negatives, and Division}
\label{subsec:rationals-division}

We extend the system to \( \mathbb{Q} \) by allowing amplitude scaling and sign inversion.

\subsubsection{Zero and Negative Numbers}
\label{subsubsec:zero-negatives}

Define the analytic zero as the zero function:
\[
\mathbb{Q}_a(0) := \int_{\mathbb{R}} 0 \cdot \phi(x)\,dx = 0
\]
A negative number \( -r \in \mathbb{Q} \) is represented by:
\[
\mathbb{Q}_a(-r) := -\mathbb{Q}_a(r) = \int_{\mathbb{R}} (-r) \cdot \phi(x)\,dx = -r
\]

\subsubsection{Rational Numbers via Scaled Bumps}
\label{subsubsec:scaled-rationals}

Let \( \phi^{(r)}(x) := r \cdot \phi(x) \) with \( r \in \mathbb{Q} \). Then:
\[
\int \phi^{(r)}(x)\,dx = r
\]
The analytic rational number \( \frac{m}{n} \in \mathbb{Q} \) is:
\[
\mathbb{Q}_a\left(\tfrac{m}{n}\right) := \sum_{k=1}^{|m|} \int_{\mathbb{R}} \operatorname{sgn}(m) \cdot \phi_k^{(1/n)}(x)\,dx = \tfrac{m}{n}
\]

\subsubsection{Division and the Multiplicative Inverse}
\label{subsubsec:division}

We define division by using amplitude scaling:
\[
\frac{1}{b} := \int_{\mathbb{R}} \phi_1^{(1/b)}(x)\,dx \quad \text{where } \phi_1^{(1/b)}(x) := \tfrac{1}{b} \cdot \phi(x)
\]
Then:
\[
\frac{a}{b} := \sum_{k=1}^a \int_{\mathbb{R}} \phi_k^{(1/b)}(x)\,dx = \frac{a}{b}
\]
More generally:
\[
\frac{m}{n} = \sum_{k=1}^{|m|} \int_{\mathbb{R}} \operatorname{sgn}(m) \cdot \phi_k^{(1/n)}(x)\,dx
\]

\subsection{Summary}
\label{subsec:summary}

\begin{itemize}
  \item \textbf{Natural numbers} are represented as sums of unit bump integrals in \( \mathbb{R}^1 \).
  \item \textbf{Addition, multiplication, exponentiation} correspond to integration over increasing dimensions.
  \item \textbf{Rational numbers} are realized by scaling the amplitude of bumps.
  \item \textbf{Zero} is the zero function; \textbf{negatives} are bumps with negative amplitude.
  \item \textbf{Division} is defined by scaled integration, producing the multiplicative inverse.
\end{itemize}

\section{Functional Axioms of Smooth Arithmetic}
\label{sec:functional-axioms}

\begin{axiom}[Smooth Base Function Axiom]
There exists a fixed smooth function \( \phi \in C_c^\infty(\mathbb{R}) \) such that:
\begin{itemize}
  \item \( \mathrm{supp}(\phi) \subseteq [-\tfrac{1}{2}, \tfrac{1}{2}] \),
  \item \( \phi(x) \ge 0 \) for all \( x \in \mathbb{R} \),
  \item \( \int_{\mathbb{R}} \phi(x) \, dx = 1 \).
\end{itemize}
\end{axiom}

\begin{axiom}[Analytic Representation of Natural Numbers]
\label{ax:analytic-naturals}
Each natural number \( n \in \mathbb{N} \) is represented by:
\[
\mathbb{N}_a(n) := \sum_{k=1}^n \int_{\mathbb{R}} \phi_k(x) \, dx,
\]
where \( \phi_k(x) := \phi(x - (k - 1)) \).
\end{axiom}

\begin{axiom}[Additivity of Integrals for Disjoint Bumps]
\label{ax:additivity}
For all \( a, b \in \mathbb{N} \), the representation satisfies:
\[
\mathbb{N}_a(a + b) = \mathbb{N}_a(a) + \mathbb{N}_a(b),
\]
with disjoint supports for \( \phi_k \).
\end{axiom}

\begin{axiom}[Tensor Product Multiplication]
\label{ax:product-multiplication}
For \( a, b \in \mathbb{N} \), the product is represented as:
\[
\mathbb{N}_a(a \cdot b) := \sum_{i=1}^a \sum_{j=1}^b \iint_{\mathbb{R}^2} \phi_{i,j}(x,y)\,dx\,dy,
\]
where \( \phi_{i,j}(x,y) := \phi(x - i + 1) \cdot \phi(y - j + 1) \).
\end{axiom}

\begin{axiom}[Exponentiation via Multidimensional Kernel]
\label{ax:exponentiation}
For \( a, b \in \mathbb{N} \), define:
\[
\mathbb{N}_a(a^b) := \sum_{\vec{i} \in [1,a]^b} \int_{\mathbb{R}^b} \prod_{k=1}^b \phi(x_k - i_k + 1) \, d\vec{x}.
\]
\end{axiom}

\begin{axiom}[Signed and Scaled Representation of Rationals]
\label{ax:rationals}
For \( r = \frac{m}{n} \in \mathbb{Q} \), define:
\[
\mathbb{Q}_a(r) := \sum_{k=1}^{|m|} \int_{\mathbb{R}} \operatorname{sgn}(m) \cdot \tfrac{1}{n} \cdot \phi_k(x) \, dx.
\]
\end{axiom}

\begin{axiom}[Linearity and Inversion]
\label{ax:linearity-inversion}
The analytic representation satisfies the following two properties:
\begin{itemize}
  \item Linearity: \( \mathbb{Q}_a(r + s) = \mathbb{Q}_a(r) + \mathbb{Q}_a(s) \),
  \item Inversion: \( \mathbb{Q}_a(-r) = -\mathbb{Q}_a(r) \).
\end{itemize}
\end{axiom}

\begin{axiom}[Zero as Null Integral]
\label{ax:zero}
\[
\mathbb{Q}_a(0) := \int_{\mathbb{R}} 0 \cdot \phi(x) \, dx = 0.
\]
\end{axiom}

\section{Algebraic Axioms of Smooth Arithmetic}
\label{sec:algebraic-axioms}

\begin{axiom}[Commutativity of Addition]
\label{ax:add-comm}
\[
\mathbb{N}_a(a + b) = \mathbb{N}_a(b + a)
\]
\end{axiom}

\begin{axiom}[Associativity of Addition]
\label{ax:add-assoc}
\[
\mathbb{N}_a((a + b) + c) = \mathbb{N}_a(a + (b + c))
\]
\end{axiom}

\begin{axiom}[Commutativity of Multiplication]
\label{ax:mul-comm}
\[
\mathbb{N}_a(a \cdot b) = \mathbb{N}_a(b \cdot a)
\]
\end{axiom}

\begin{axiom}[Associativity of Multiplication]
\label{ax:mul-assoc}
\[
\mathbb{N}_a((a \cdot b) \cdot c) = \mathbb{N}_a(a \cdot (b \cdot c))
\]
\end{axiom}

\begin{axiom}[Distributivity]
\label{ax:distributive}
\[
\mathbb{N}_a(a \cdot (b + c)) = \mathbb{N}_a(a \cdot b) + \mathbb{N}_a(a \cdot c)
\]
\end{axiom}

\begin{axiom}[Neutral Elements]
\label{ax:neutral}
There exist:
\begin{itemize}
  \item \( 0 \in \mathbb{Q}_a \) such that \( \mathbb{Q}_a(a + 0) = \mathbb{Q}_a(a) \),
  \item \( 1 \in \mathbb{N}_a \) such that \( \mathbb{N}_a(a \cdot 1) = \mathbb{N}_a(a) \).
\end{itemize}
\end{axiom}

\begin{remark}
It is important to note that the algebraic axioms listed below are not intended as new axioms for the classical number systems \( \mathbb{N} \), \( \mathbb{Z} \), or \( \mathbb{Q} \). Rather, they serve as postulates that ensure the analytic model constructed via integrals of smooth functions faithfully preserves the fundamental algebraic properties of these systems. These axioms guarantee that addition, multiplication, and other operations in the analytic model remain compatible with their discrete counterparts.
\end{remark}

\section{Analytic Primes}
\label{sec:analytic-primes}

In classical arithmetic, a prime number is a natural number greater than 1 with no nontrivial divisors. We extend this notion to the analytic model by leveraging the geometric structure of integral representations.

\begin{definition}[Analytic Prime Number]
A number \( p \in \mathbb{N} \) is an \emph{analytic prime} if its representation \( \mathbb{N}_a(p) \) admits no nontrivial decomposition into a tensor product of lower-dimensional integrals. Formally, there exist no \( a, b \in \mathbb{N} \) with \( a, b \ge 2 \) such that:
\[
\mathbb{N}_a(p) = \sum_{i=1}^a \sum_{j=1}^b \iint_{\mathbb{R}^2} \phi_{i,j}(x,y) \, dx \, dy.
\]
Equivalently, \( p \) cannot be expressed analytically as \( \mathbb{N}_a(a \cdot b) \) for \( a, b \ge 2 \).
\end{definition}

\begin{remark}[Geometric and Analytic Primality]
Analytic primality has both a geometric and analytic interpretation:
\begin{itemize}
  \item \textbf{Geometric:} For composite \( n = a \cdot b \), the analytic representation \( \mathbb{N}_a(n) \) decomposes into a full \( a \times b \) grid of disjoint bump functions \( \phi_{i,j}(x,y) \). For primes \( p \), the only admissible grids are degenerate: \( 1 \times p \) or \( p \times 1 \), corresponding to trivial multiplication by 1.
  \item \textbf{Analytic:} Due to the normalization \( \int \phi = 1 \), any analytic decomposition must preserve the total integral value. Nontrivial decompositions would require overlapping supports or non-integer scaling, violating both the disjointness and amplitude structure defined in the axioms.
\end{itemize}
\end{remark}

\begin{example}
The number \( 6 \) factors analytically as:
\[
\mathbb{N}_a(6) = \sum_{i=1}^2 \sum_{j=1}^3 \iint \phi_{i,j}(x,y) \, dx \, dy = \left(\sum_{i=1}^2 \int \phi_i(x) \, dx\right) \cdot \left(\sum_{j=1}^3 \int \phi_j(y) \, dy\right),
\]
which reflects the classical identity \( 6 = 2 \cdot 3 \).

In contrast, for \( \mathbb{N}_a(7) \), no such nontrivial decomposition exists. Any attempt to write \( 7 = a \cdot b \) with \( a, b \ge 2 \) would require \( a = 7/2 \), \( a = 7/3 \), etc., contradicting the requirement that \( a, b \in \mathbb{N} \). Therefore, \( \mathbb{N}_a(7) \) remains indivisible in the analytic sense.
\end{example}

\begin{proposition}
The set of analytic primes \( \mathcal{P}_a \) coincides with the set of classical primes \( \mathcal{P} \)~\cite{apostol1976introduction,serre1973course}.
\end{proposition}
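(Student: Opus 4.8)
The plan is to prove both inclusions $\mathcal{P}_a \subseteq \mathcal{P}$ and $\mathcal{P} \subseteq \mathcal{P}_a$, each of which collapses to the classical definition of primality once one observes that the analytic representation map is trivial in the sense that $\mathbb{N}_a(n) = n$ for every $n \in \mathbb{N}$, and in particular injective. The only genuine work is to confirm that the two-dimensional grid integral appearing in the definition of an analytic prime evaluates to exactly $ab$, with no spurious contributions.

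First I would record the computational core. By the Smooth Base Function Axiom each $\phi_k$ has unit mass, so $\int_{\mathbb{R}} \phi_k(x)\,dx = 1$ and $\mathbb{N}_a(n) = \sum_{k=1}^n 1 = n$, as already noted in Section~\ref{sec:basic-definitions}. For the product kernel, Fubini's theorem together with the factorization $\phi_{i,j}(x,y) = \phi(x-i+1)\phi(y-j+1)$ gives $\iint_{\mathbb{R}^2}\phi_{i,j}(x,y)\,dx\,dy = \bigl(\int_{\mathbb{R}}\phi\bigr)\bigl(\int_{\mathbb{R}}\phi\bigr) = 1$ for each pair $(i,j)$; summing over the $a \times b$ index set yields $\sum_{i=1}^a\sum_{j=1}^b \iint \phi_{i,j} = ab$. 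Hence the defining equation $\mathbb{N}_a(p) = \sum_{i=1}^a\sum_{j=1}^b \iint\phi_{i,j}$ is, as an identity of real numbers, equivalent to $p = ab$.

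With this in hand both inclusions are immediate. If $p \in \mathbb{N}$ is classically composite, write $p = ab$ with $a,b \ge 2$; the computation above exhibits precisely the forbidden decomposition, so $p \notin \mathcal{P}_a$, proving $\mathcal{P}_a \subseteq \mathcal{P}$ by contraposition. Conversely, if $p$ is a classical prime and $\mathbb{N}_a(p) = \sum_{i=1}^a\sum_{j=1}^b \iint\phi_{i,j}$ for some $a,b \ge 2$, then $p = ab$ with $a,b \ge 2$, contradicting primality; hence $p \in \mathcal{P}_a$. The value $p = 1$ is excluded from both sets by the standing convention that primes exceed $1$, so it requires no separate treatment.

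The main obstacle is not a hard estimate but a point of rigor: one must ensure the definition of analytic prime is neither vacuously weaker nor stronger than intended. Concretely, the passage from the real-number equation $\mathbb{N}_a(p) = \mathbb{N}_a(ab)$ to the natural-number equation $p = ab$ relies on injectivity of $\mathbb{N}_a$, which is exactly the content of $\mathbb{N}_a = \mathrm{id}$. Equally, one should invoke the Remark preceding the Proposition to justify that no overlapping-support or fractional-amplitude ``pseudo-factorization'' can reproduce the value $p$ except through a genuine decomposition $p = ab$ with $a,b \in \mathbb{N}$: the disjointness of the bump supports and the normalization $\int\phi = 1$ are precisely what force every grid cell to contribute $1$ and forbid cross terms. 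Making these two observations explicit is the whole substance of the proof; the remainder is the classical fact that non-expressibility as $ab$ with $a,b \ge 2$ is the definition of a prime.
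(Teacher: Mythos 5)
Your proposal is correct and follows essentially the same route as the paper's proof: both directions are reduced to the classical definition of primality via the observation that the analytic representation satisfies $\mathbb{N}_a(n) = n$ and that the $a \times b$ grid integral evaluates exactly to $ab$. You simply make explicit (Fubini, unit mass of each cell, injectivity of $\mathbb{N}_a$) what the paper's two-line argument leaves implicit by citing Axiom~\ref{ax:product-multiplication}.
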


\begin{proof}
We prove both directions:
\begin{enumerate}
  \item Let \( p \in \mathcal{P} \) be classically prime. Then by definition, \( p \) cannot be written as \( a \cdot b \) with \( a, b \ge 2 \). Hence, by Axiom ~\ref{ax:product-multiplication}, there is no corresponding analytic product decomposition of \( \mathbb{N}_a(p) \), so \( p \in \mathcal{P}_a \).
  \item Conversely, suppose \( p \notin \mathcal{P} \). Then \( p = a \cdot b \) for some \( a, b \ge 2 \), and therefore \( \mathbb{N}_a(p) \) admits a nontrivial 2D analytic decomposition. Thus \( p \notin \mathcal{P}_a \).
\end{enumerate}
\end{proof}

\begin{remark}
This correspondence confirms that the analytic model faithfully encodes classical arithmetic. It also opens a path toward formulating smooth or probabilistic notions of primality.
\end{remark}

Future directions include:
\begin{itemize}
  \item Construction of smooth approximations to the prime-counting function \( \pi(x) \),
  \item Development of analytic analogs to the Riemann zeta function,
  \item Exploration of smooth primality indicators and sieve methods,
  \item Defining probabilistic or entropy-based measures of primality using local geometry of integral structures.
\end{itemize}

\section{Analytic Factorization}
\label{sec:analytic-factorization}

The definition of analytic primality naturally leads to a broader concept: \emph{analytic factorization}. In this framework, the analytic representation of a composite number can be decomposed into tensor products of lower-dimensional integrals, each corresponding to an analytically irreducible (i.e., prime) component.

\begin{definition}[Analytic Factorization]
Let \( n \in \mathbb{N} \) and \( \mathbb{N}_a(n) \) its analytic representation via multidimensional integration. An \emph{analytic factorization} of \( n \) is a sequence of decompositions:
\[
\mathbb{N}_a(n) = \int_{\mathbb{R}^{d_1 + \cdots + d_k}} \prod_{i=1}^k \Phi_i(\vec{x}_i) \, d\vec{x}_1 \cdots d\vec{x}_k,
\]
where each \( \Phi_i \) is a bump function of dimension \( d_i \), and the integral corresponds to a product of analytic representations of irreducible factors \( n = p_1 \cdot \cdots \cdot p_k \).

Each component \( \mathbb{N}_a(p_i) \) in the analytic factorization corresponds to a one-dimensional integral (or a finite sum of such integrals) over scaled and shifted bump functions. This reflects the definition of analytic primality, where a prime number is characterized precisely by the irreducibility of such one-dimensional integral structures. Thus, analytic factorization decomposes a multidimensional integral into tensor products of elementary 1D analytic primes.
\end{definition}

\begin{remark}
This notion mirrors classical prime factorization, but in a geometric setting: each factor corresponds to a lower-dimensional integral kernel that cannot be further split under the constraints of smoothness, disjoint support, and normalization.
\end{remark}

\begin{example}
The number \( n = 12 = 2 \cdot 2 \cdot 3 \) admits the analytic decomposition:
\[
\mathbb{N}_a(12) = \left(\sum_{i=1}^2 \int \phi^{(2)}_i(x) \, dx\right) 
\times \left(\sum_{j=1}^2 \int \phi^{(2)}_j(y) \, dy\right) 
\times \left(\sum_{k=1}^3 \int \phi^{(3)}_k(z) \, dz\right).
\]
This corresponds to the classical prime factorization \( 12 = 2 \times 2 \times 3 \), where each prime factor \( p_i \) is represented by a one-dimensional analytic component \( \mathbb{N}_a(p_i) \). The superscripts label independent coordinates.
\end{example}

\begin{remark}[Uniqueness and Order]
In classical arithmetic, factorizations are unique up to permutation of prime factors. In the analytic model, different groupings and orderings of the tensor integrals (e.g., \( (2 \times 2) \times 3 \) vs. \( 2 \times (2 \times 3) \)) may result in different geometric constructions, but the total integral remains invariant due to the linearity and associativity of integration.

For example, \( n = 8 \) admits both:
\begin{itemize}
  \item a cubic representation: \( \mathbb{N}_a(2) \times \mathbb{N}_a(2) \times \mathbb{N}_a(2) \),
  \item and a rectangular prism: \( \mathbb{N}_a(4) \times \mathbb{N}_a(2) \),
\end{itemize}
reflecting different valid decompositions that ultimately yield the same analytic value.
\end{remark}

\begin{remark}[On Wave Profiles and Superscripts]
In the analytic representation of numbers via integrals of wave kernels \( \phi \), we sometimes write \( \phi^{(p)}_i \) to indicate that the function corresponds to the \( i \)-th harmonic contribution associated with the prime factor \( p \). The superscript \( (p) \) distinguishes the spectral or geometric profile of that kernel — for example, its characteristic frequency, phase, or localization scale.

This is especially relevant in composite numbers, where the full analytic structure arises as a tensor product of contributions from different prime factors. In such cases, each \( \phi^{(p)}_i \) may live in its own coordinate axis or frequency domain, contributing independently to the overall resonance pattern.

In contrast, when defining an \emph{analytic prime}, we do not require such superscripts: the essential idea is the \emph{indivisibility} of the integral structure. The non-decomposability of \( \mathbb{N}_a(p) \) into a product of lower-dimensional integrals captures the same essence as irreducibility in classical number theory — without referring to the internal profile of \( \phi \).
\end{remark}

\subsection*{Analytic Divisibility and Multiplicativity}

\begin{definition}[Analytic Divisor]
Let \( a, n \in \mathbb{N} \). We say that \( a \) is an \emph{analytic divisor} of \( n \), denoted \( a \mid_a n \), if there exists \( b \in \mathbb{N} \) such that:
\[
\mathbb{N}_a(n) = \mathbb{N}_a(a) * \mathbb{N}_a(b),
\]
where \( * \) denotes a tensor product of integrals over disjoint coordinate domains, consistent with the structure defined in Axiom~\ref{ax:product-multiplication}.
\end{definition}

\begin{remark}
The analytic divisor relation \( a \mid_a n \) respects both the arithmetic and geometric structure of the model. It coincides with classical divisibility \( a \mid n \) under the assumption of integer-valued inputs and disjoint bump supports, and admits a structural interpretation: the integral kernel representing \( n \) decomposes cleanly into a substructure corresponding to the analytic representation of \( a \).
\end{remark}

\begin{definition}[Analytic Multiplicativity]
The analytic representation is said to be \emph{multiplicative} if for all \( a, b \in \mathbb{N} \), the following holds:
\[
\mathbb{N}_a(a \cdot b) = \mathbb{N}_a(a) * \mathbb{N}_a(b),
\]
where the right-hand side denotes the analytic construction over a product space \( \mathbb{R}^{d_a + d_b} \), assuming disjoint variables.
\end{definition}

\begin{remark}
This property follows directly from the tensor product formulation of multiplication (Axiom ~\ref{ax:product-multiplication}), and ensures that the analytic model behaves as a semiring under bump-integral operations.
\end{remark}

All fundamental properties of analytic factorization coincide with those of classical arithmetic, provided that the factors correspond to integer values and the supports of bump functions remain disjoint and properly normalized.

\section{Approximate Decomposability and Analytic Rigidity}
\label{sec:analytic-rigidity}

In classical arithmetic, primality is defined in terms of the absence of nontrivial integer factorizations. In the analytic framework, we may attempt to recover primality from first principles---without reference to classical number theory---by asking whether a number's integral representation admits any low-error approximation by a tensor product of lower-dimensional bump structures.

\begin{definition}[Approximate Decomposability Functional]
Let \( n \in \mathbb{N} \). Define the approximation error functional
\[
E(n; a, b) := \left\| \mathbb{N}_a(n) - \mathbb{N}_a(a) * \mathbb{N}_a(b) \right\|_{L^2}, \quad a,b \ge 2.
\]
We say that \( n \) is \emph{approximately decomposable} if there exist real \( a,b \ge 2 \) with \( a \cdot b = n \) such that \( E(n; a, b) < \epsilon \) for some fixed small threshold \( \epsilon > 0 \). Otherwise, \( n \) is said to be \emph{analytically rigid}.
\end{definition}

\begin{definition}[Analytic Rigidity Criterion]
A number \( n \in \mathbb{N} \) is \emph{analytically prime} if
\[
\inf_{\substack{a, b \ge 2 \\ a \cdot b = n}} E(n; a, b) > \epsilon,
\]
for some fixed \( \epsilon > 0 \) depending on the bump kernel \( \phi \).
\end{definition}

\begin{remark}[Geometric Intuition]
The function \( \mathbb{N}_a(n) \) consists of \( n \) disjoint unit bump functions aligned at integer positions. An analytic decomposition into a product \( a \cdot b \) requires these bumps to be arranged in a rectangular \( a \times b \) grid. For prime \( n \), no such grid exists, and any attempt to approximate it via non-integer \( a, b \) results in either overlap or fragmentation, both incompatible with the axioms of the model.
\end{remark}

\begin{example}[Contrasting 16 and 17]
For \( n = 16 \), the representation \( \mathbb{N}_a(4) * \mathbb{N}_a(4) \) yields a perfect match: \( E(16; 4, 4) = 0 \), corresponding to a \( 4 \times 4 \) rectangular grid.

For \( n = 17 \), no such exact decomposition exists. Any attempt to find \( a, b \in \mathbb{N} \) with \( a, b \ge 2 \) and \( a \cdot b = 17 \) fails, as no such pair exists. Attempts to approximate with non-integer values, such as \( a = b = \sqrt{17} \), lead to irregular bump arrangements requiring fractional translations, which violate disjointness and unit alignment.

Note that not all composite numbers admit square decompositions: for example, \( 10 = 2 \cdot 5 \) corresponds to a \( 2 \times 5 \) rectangular grid. Hence, the failure of square decomposition alone is not sufficient to indicate primality; the absence of \emph{any} integer-factorable grid is the correct analytic criterion.
\end{example}

\begin{remark}[Potential for Analytic Primality Testing]
This framework suggests a purely analytic method for detecting prime numbers: by testing whether the smooth function \( \mathbb{N}_a(n) \) admits a low-error approximation via analytic factorizations. The absence of such approximations may serve as a signal of primality, offering an alternative to classical divisibility-based methods.
\end{remark}

\section{Unfolding Tensor Integrals into Summation Approximations}
\label{sec:unfolding-tensor-integrals}

Tensor integrals in the analytic model often arise as multidimensional representations of arithmetic operations, such as multiplication. A natural question is whether these structures can be approximated or unfolded into simpler summations, particularly one-dimensional expressions, and what this reveals about the underlying arithmetic nature of the number.

Consider the base tensor product of bump functions:
\[
I := \iint_{\mathbb{R}^2} \phi(x)\phi(y) \, dx \, dy
\]
To approximate this integral numerically, we consider rectangular summation over a discrete grid:
\[
I \approx \sum_{i=1}^{N} \sum_{j=1}^{M} \phi(x_i) \phi(y_j) \, \Delta x \, \Delta y
\]
We can factor the sum by extracting \( \Delta x \) outside the inner sum:
\[
I \approx \Delta x \sum_{i=1}^{N} \phi(x_i) \cdot \left[ \sum_{j=1}^{M} \phi(y_j) \, \Delta y \right] = \Delta x \sum_{i=1}^{N} \phi(x_i) \cdot A
\]
where \( A := \sum_{j=1}^{M} \phi(y_j) \, \Delta y \approx \int \phi(y) \, dy \). This gives:
\[
I \approx A \cdot \sum_{i=1}^{N} \phi(x_i) \, \Delta x \approx \left( \int \phi(x) \, dx \right) \cdot \left( \int \phi(y) \, dy \right) = \left( \int \phi \right)^2
\]

\begin{remark}[One-Dimensional Approximation of Tensor Integrals]
The procedure of extracting one integration variable effectively collapses the tensor structure into a scalar-weighted one-dimensional integral. This approximation preserves the total mass of the bump structure but eliminates the geometric arrangement inherent in the original tensor form.
\end{remark}

\begin{example}[Flattening a Tensor Product]
Let \( F(x, y) = \phi(x - i + 1) \cdot \phi(y - j + 1) \) be a localized bump in \( \mathbb{R}^2 \). Then the integral:
\[
\iint F(x, y) \, dx \, dy = \int \phi(x - i + 1) \, dx \cdot \int \phi(y - j + 1) \, dy = 1
\]
Flattening along the \( y \)-axis via integration yields a scaled version of the 1D bump:
\[
G_i(x) := \int \phi(x - i + 1) \cdot \phi(y - j + 1) \, dy = \phi(x - i + 1) \cdot \underbrace{\int \phi(y - j + 1) \, dy}_{=1} = \phi(x - i + 1)
\]
Thus, flattening the full 2D sum \( \sum_{i=1}^a \sum_{j=1}^b F_{i,j}(x, y) \) along one axis yields a function of \( x \) only with total integral \( ab \).
\end{example}

\begin{remark}[Limitation of Flattening]
While this technique preserves the total integral value, it collapses the underlying multidimensional structure. As such, it is not sufficient to reconstruct multiplicative decompositions analytically. However, the residual difference between the original tensor structure and any attempted one-dimensional approximation may be used as a diagnostic of analytic rigidity.
\end{remark}

\subsection*{Residual Flattening Error as a Primality Indicator}

Consider a number \( n = a \cdot b \) and its two-dimensional analytic representation:
\[
F_{a,b}(x, y) := \sum_{i=1}^a \sum_{j=1}^b \phi(x - i + 1)\phi(y - j + 1).
\]
Integrating over \( y \) collapses the structure into a one-dimensional approximation:
\[
G_a(x) := \int_{\mathbb{R}} F_{a,b}(x, y) \, dy = \sum_{i=1}^a \phi(x - i + 1) \cdot \underbrace{\int \sum_{j=1}^b \phi(y - j + 1) \, dy}_{= b} = b \sum_{i=1}^a \phi(x - i + 1).
\]

We define the residual difference:
\[
R_{a,b}(x, y) := F_{a,b}(x, y) - G_a(x)
\]
and consider its norm:
\[
E_{a,b}^{(1D)} := \left\| R_{a,b}(x, y) \right\|_{L^2(\mathbb{R}^2)}.
\]

\begin{proposition}[Flattening Residual as Primality Signal]
Let \( n \in \mathbb{N} \). For each pair \( a, b \ge 2 \) such that \( a \cdot b = n \), define the residual flattening error \( E_{a,b}^{(1D)} \). If \( n \) is composite, then there exists such a pair with small residual:
\[
\inf_{a b = n} E_{a,b}^{(1D)} \approx 0.
\]
Conversely, for prime \( n \), no such pair exists, and the minimal residual remains strictly bounded below:
\[
\inf_{a, b \ge 2} E_{a,b}^{(1D)} > \varepsilon.
\]
\end{proposition}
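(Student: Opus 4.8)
The plan is to prove the two directions separately, exploiting throughout the fact that the translates $\phi(\cdot-i+1)$ have pairwise disjoint supports and unit integral.

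\textbf{Composite direction.} Suppose $n=ab$ with integers $a,b\ge2$. First I would record the exact product identity, immediate from expanding the double sum,
\[
F_{a,b}(x,y)=\Bigl(\sum_{i=1}^{a}\phi(x-i+1)\Bigr)\Bigl(\sum_{j=1}^{b}\phi(y-j+1)\Bigr).
\]
Since every $y$-slice of $F_{a,b}$ is a scalar multiple of the common profile $h_b(y):=\sum_{j=1}^{b}\phi(y-j+1)$ with $\int h_b=b$, the flattening $F_{a,b}\mapsto G_a$ followed by re-lifting along the collapsed axis with the normalized profile $h_b/b$ returns exactly $F_{a,b}$; hence the residual attached to this integer-aligned factorization vanishes on the grid support $[-\tfrac12,a-\tfrac12]\times[-\tfrac12,b-\tfrac12]$, giving $\inf_{ab=n}E_{a,b}^{(1D)}=0$. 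The only subtlety is fixing, once and for all, the normalization convention in the definition of $G_a$.

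\textbf{Prime direction.} Suppose $n$ is prime. The starting point is that there is no pair of \emph{integers} $a,b\ge2$ with $ab=n$, so no exact $a\times b$ grid of disjoint unit bumps represents $n$, and it remains only to exclude low-error \emph{approximate} grids. I would argue that any admissible competitor with real parameters $a,b\ge2$, $ab=n$, is forced either to overlap two bumps in some row or column or to translate some bump by $\delta$ with $\operatorname{dist}(\delta,\mathbb Z)>0$, and then invoke the quantitative separation estimate
\[
\bigl\|\phi-\phi(\cdot-\delta)\bigr\|_{L^2}^{2}=2\|\phi\|_{L^2}^{2}-2\langle\phi,\phi(\cdot-\delta)\rangle\ \ge\ c(\phi)>0,
\]
valid whenever $\operatorname{dist}(\delta,\mathbb Z)$ is bounded away from $0$, aggregating the per-bump defects over the grid to conclude $E_{a,b}^{(1D)}\ge\varepsilon$ for an $\varepsilon=\varepsilon(\phi)>0$ independent of $n$.

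\textbf{Main obstacle.} The hard part is the prime direction, specifically making the lower bound \emph{uniform in} $n$: as $n$ grows the forced offset can be as small as $\delta=O(1/n)$, giving a per-bump defect of only $O(1/n^{2})$, so one must show either that summing the $\Theta(n)$ such defects still gives an $O(1)$ total, or --- more robustly --- that the Diophantine constraint that $ab$ equal the integer $n$ cannot be met while keeping \emph{every} offset simultaneously small. A second, more elementary point must be settled before either direction is literally meaningful: as written $R_{a,b}=F_{a,b}-G_a$ is not in $L^2(\mathbb R^2)$, since $G_a$ is constant in $y$ and so $R_{a,b}\to-G_a(x)$ as $|y|\to\infty$; the norm in the statement must therefore be read as an $L^2$ norm over the grid support (or $G_a$ must be re-lifted with a normalized $y$-profile as above), and this convention should be fixed at the outset.
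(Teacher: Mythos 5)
The first thing to say is that the paper offers no proof of this proposition: it is asserted and followed only by an interpretive remark, and the next section quietly re-labels essentially the same two claims as a ``Hypothesis'' and an unproven ``Theorem''. So there is no argument in the paper to compare yours against; your proposal has to stand on its own.

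Your diagnosis of the definitional defect is correct and is the right place to start: since $G_a$ is constant in $y$, the residual $R_{a,b}=F_{a,b}-G_a$ is not in $L^2(\mathbb{R}^2)$, so $E_{a,b}^{(1D)}=+\infty$ for every pair as literally defined; and even after restricting to the grid support the literal residual does not vanish for exact factorizations, since $F_{a,b}(x,y)-G_a(x)=\bigl(\sum_{i}\phi(x-i+1)\bigr)\bigl(\sum_{j}\phi(y-j+1)-b\bigr)$. With your fix --- re-lifting $G_a$ along the collapsed axis by the normalized profile $h_b/b$ --- the composite direction becomes an exact identity because $F_{a,b}$ factors as a product of row and column sums; that half is sound. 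The genuine gap is in the prime direction, and you have named it accurately yourself: a forced offset of order $1/n$ contributes only $O(1/n^{2})$ per bump, and $\Theta(n)$ such terms aggregate to a squared defect of order $1/n$, so the argument as sketched cannot produce an $\varepsilon$ independent of $n$. The paper implicitly concedes the point: its own (unproven) theorem in the following section claims only $\inf_{a,b\ge 2}E_{a,b}^{(1D)}(p)\gtrsim 1/\sqrt{p}$, a bound tending to zero, which is inconsistent with the uniform ``$>\varepsilon$'' asserted in this proposition unless $\varepsilon$ is allowed to depend on $n$. The only way to make the prime half literally true is the vacuous reading --- for prime $n$ the set of integer pairs $a,b\ge 2$ with $ab=n$ is empty, so the infimum is $+\infty$ --- but that trivializes the statement and abandons exactly the ``approximate grid'' content you were right to try to give it. As it stands, the prime direction is either vacuous or open, and no choice of quantitative separation lemma of the kind you invoke will close it with an $n$-independent constant.
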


\begin{remark}
This approach does not rely on divisibility directly, but rather on the structural behavior of the analytic kernel under dimension reduction. The failure to compress a prime kernel into a lower-dimensional form without distortion reflects its indivisibility.
\end{remark}

\section{Formal Hypotheses and Error Bounds}
\label{sec:formal-primality-theorems}

We formalize the intuition developed in the previous sections into a precise analytic criterion for primality, based on the residual error~\cite{hormander1983analysis} of flattening tensor representations.

\begin{hypothesis}[Separation of Primes and Composites]
Let \( E_{a,b}^{(1D)}(n) := \| F_{a,b}(x,y) - G_a(x) \|_{L^2(\mathbb{R}^2)} \), where \( F_{a,b} \) and \( G_a \) are defined as above. Then:

\begin{itemize}
  \item For all composite \( n \in \mathbb{N} \), there exist integers \( a, b \ge 2 \) such that \( a \cdot b = n \) and
  \[
  E_{a,b}^{(1D)}(n) = 0.
  \]
  \item For all prime \( p \in \mathbb{N} \) and any \( \varepsilon > 0 \), we have:
  \[
  \inf_{a, b \ge 2} E_{a,b}^{(1D)}(p) > \varepsilon.
  \]
\end{itemize}
\end{hypothesis}

\begin{theorem}[Lower Bound on Flattening Error for Primes]
Let \( \phi \in C_c^\infty(\mathbb{R}) \) be a compactly supported bump function normalized so that \( \int \phi = 1 \). Then for any prime \( p \in \mathbb{N} \),
\[
\inf_{a, b \ge 2} E_{a,b}^{(1D)}(p) \gtrsim \frac{1}{\sqrt{p}},
\]
with the implied constant depending on \( \phi \).
\end{theorem}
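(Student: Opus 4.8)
The plan is to reduce the two-dimensional $L^2$ norm of the residual to a product of one-dimensional integrals using the exact tensor factorization of $F_{a,b}$, and then to read off the $p^{-1/2}$ rate from the normalization $\int\phi=1$ together with the strict positivity of the \emph{variance defect} $c_\phi := \|\phi\|_{L^2(\mathbb{R})}^2 - 1$. First I would record that $F_{a,b}(x,y)=U_a(x)\,W_b(y)$, where $U_a(x):=\sum_{i=1}^a\phi(x-i+1)$ and $W_b(y):=\sum_{j=1}^b\phi(y-j+1)$, so $G_a(x)=\big(\int W_b\big)U_a(x)=b\,U_a(x)$ and $R_{a,b}(x,y)=U_a(x)\big(W_b(y)-b\big)$ is rank one. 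Taking the $L^2$ norm over the support rectangle $Q_{a,b}:=[-\tfrac12,a-\tfrac12]\times[-\tfrac12,b-\tfrac12]$ of $F_{a,b}$, which is the domain on which the finite quantity $E_{a,b}^{(1D)}$ actually lives (equivalently, with $G_a$ localized there), the norm separates as
\[ \big(E_{a,b}^{(1D)}\big)^2=\Big(\int U_a(x)^2\,dx\Big)\Big(\int\big(W_b(y)-b\big)^2\,dy\Big). \]

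Next I would evaluate the two factors using disjointness of the shifted bumps: $\int U_a^2 = a\,\|\phi\|_{L^2}^2$, and, expanding the square with $\int W_b=b$ over the length-$b$ interval, $\int(W_b-b)^2 = b\big((b-1)^2+c_\phi\big)$. The only structural input needed is $c_\phi>0$, which is Cauchy--Schwarz on $\mathrm{supp}\,\phi$: $1=\big(\int\phi\big)^2\le|\mathrm{supp}\,\phi|\cdot\|\phi\|_{L^2}^2\le\|\phi\|_{L^2}^2$, with equality only if $\phi$ is a.e.\ constant on its support, impossible for a nonzero $\phi\in C_c^\infty(\mathbb{R})$. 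Hence for all $a,b\ge 2$,
\[ \big(E_{a,b}^{(1D)}\big)^2 = ab\,\|\phi\|_{L^2}^2\big((b-1)^2+c_\phi\big)\ \ge\ \|\phi\|_{L^2}^2\,c_\phi\cdot(ab), \]
so the raw flattening error has order $\sqrt{ab}$; measured relative to the total mass $\mathbb{N}_a(ab)=ab$ --- the scale-consistent normalization, under which a vanishing residual would be meaningful --- it is $\ge\sqrt{\|\phi\|_{L^2}^2\,c_\phi}\,\cdot(ab)^{-1/2}$, with constant depending only on $\phi$.

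Now let $p$ be prime. Since no integer pair $a,b\ge 2$ satisfies $ab=p$, the admissible decompositions are ragged grids built from a real shape parameter: taking integer $a\in\{2,\dots,\lfloor p/2\rfloor\}$ (so $b=p/a\ge 2$), one has $\lfloor p/a\rfloor\ge 2$ full rows of $a$ unit bumps plus a final row of total mass $\rho=p\bmod a$, with $1\le\rho\le a-1$ forced by primality. Distributing $\rho$ uniformly keeps $F$ separable, $F=U_a(x)W(y)$ with $\int W=p/a$ and $\int W^2=\big(\lfloor p/a\rfloor+(\rho/a)^2\big)\|\phi\|_{L^2}^2\ge\lfloor p/a\rfloor\,\|\phi\|_{L^2}^2$; flattening gives $G=(p/a)U_a$ and, as above,
\[ \big(E_{a,p/a}^{(1D)}\big)^2 = a\,\|\phi\|_{L^2}^2\int\big(W-\tfrac{p}{a}\big)^2 \ \ge\ a\big\lfloor\tfrac{p}{a}\big\rfloor\,\|\phi\|_{L^2}^4\ \ge\ \tfrac{p}{2}\,\|\phi\|_{L^2}^4 , \]
using $a\lfloor p/a\rfloor = p-\rho\ge p-(a-1)\ge p/2$. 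Relative to the mass $p$ this is $\ge\|\phi\|_{L^2}^2/\sqrt{2p}\gtrsim p^{-1/2}$, the claimed bound.

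The hard part is upgrading this to a genuine bound on the infimum. The infimum should also range over non-uniform ways of spreading the defect mass $\rho$ across the $a$ boundary cells, over non-integer $a$, and over alternative realizations of the non-integer grid (a partial column rather than a partial row, or scattered leftover bumps) --- all of which break the separability exploited above. One must show that none of these arrangements beats order $\sqrt p$, which reduces to a uniform lower bound for $\big\|F-\int F\,dy\big\|_{L^2(Q)}$ over all configurations of disjoint bumps of individual mass $\le 1$, total mass $p$, and $y$-extent at least $2$. I would attack this by a direct energy computation splitting $\|R\|^2$ into within-row and between-row contributions, both bounded below once total mass $p$ forces enough bumps spread over at least two layers; but controlling the worst case uniformly --- in particular excluding between-row cancellation --- is where the real work lies. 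The remaining steps are routine integrals over disjointly supported bumps.
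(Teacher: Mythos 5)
The paper states this theorem without any proof, so there is nothing to compare your argument against; what follows assesses the proposal on its own terms. Your preparatory work is sound and in places sharper than the paper's setup deserves: you correctly observe that \( \|F_{a,b}-G_a\|_{L^2(\mathbb{R}^2)} \) is infinite as literally defined (since \( G_a(x) \) does not decay in \( y \)) and must be localized to the support rectangle, and your rank-one factorization \( R_{a,b}=U_a(x)\bigl(W_b(y)-b\bigr) \) with the resulting identity \( \bigl(E_{a,b}^{(1D)}\bigr)^2=ab\,\|\phi\|_{L^2}^2\bigl((b-1)^2+c_\phi\bigr) \), \( c_\phi=\|\phi\|_{L^2}^2-1>0 \), is correct. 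But note what that identity says: the flattening error is of order \( \sqrt{ab} \) even for integer \( a,b \) with \( ab=n \) composite, which directly contradicts the paper's Hypothesis that \( E_{a,b}^{(1D)}(n)=0 \) for composites. Your remedy --- dividing by the total mass \( ab \) to obtain the \( 1/\sqrt{p} \) rate --- rescues the stated exponent, but it also shows that the same \( \asymp n^{-1/2} \) lower bound holds for composites with exact factorizations, so the theorem as you prove it cannot separate primes from composites. That is a defect of the framework rather than of your computation, but it should be stated explicitly rather than absorbed silently into a renormalization the theorem does not mention.

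The genuine gap is the one you name yourself: the infimum. For prime \( p \) the paper never specifies which objects the infimum \( \inf_{a,b\ge 2} \) ranges over (integer \( a,b \) force \( ab\ne p \); non-integer \( a \) makes \( \sum_{i=1}^{a} \) meaningless), so you are forced to invent a class of admissible configurations --- ragged grids with a uniformly spread partial row --- and you bound the error only on that class, where separability still holds. The configurations that could conceivably make the infimum small are exactly the non-separable ones (unevenly distributed leftover mass, partial columns, scattered bumps), and there your product formula is unavailable; the within-row/between-row energy split you sketch is the right instinct, but without a lemma excluding cancellation between rows the lower bound on the true infimum is not established. As it stands the proposal proves the bound for a restricted family of decompositions and honestly defers the general case, so it is an incomplete proof of the statement --- though, since the statement is not well-posed without the definitional choices you supply, any complete proof would first have to fix the definition of the infimum.
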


\begin{hypothesis}[Smoothness Gap Hypothesis]
Let \( n \in \mathbb{N} \), and define the minimal residual curve:
\[
E^{(1D)}(n) := \inf_{a \cdot b = n,\ a,b \ge 2} E_{a,b}^{(1D)}.
\]
Then \( E^{(1D)}(n) \) as a function of \( n \) is smooth over composite values, but exhibits sharp jumps at prime values, forming a piecewise-analytic function with singularities at primes.
\end{hypothesis}

\section{Periodic Kernel Arithmetic as a Smooth Submodel}
\label{sec:periodic-kernel}

While the general analytic model represents natural numbers as sums of localized bump integrals, this construction—though robust—can become analytically cumbersome when extended to non-integer arguments or smooth arithmetic operations.

In this section, we develop a smooth and globally defined submodel based on periodic kernel integration. This formulation preserves all fundamental properties of arithmetic over \( \mathbb{N} \), while enabling continuous interpolation across \( \mathbb{R} \), compact integral expressions, and harmonic analysis through Fourier modes. It serves as a natural, parameterizable, and computationally efficient refinement of the local bump model.

\subsection{Definition and Normalization}
Let \( \rho(x) \) be the periodic function:
\[
\rho(x) := 2 \sin^2(\pi x) = 1 - \cos(2\pi x),
\]
which satisfies:
\begin{itemize}
  \item Periodicity: \( \rho(x+1) = \rho(x) \) for all \( x \in \mathbb{R} \),
  \item Smoothness: \( \rho \in C^\infty(\mathbb{R}) \),
  \item Positivity: \( \rho(x) \geq 0 \), with zeros at integers,
  \item Normalization: \( \int_0^1 \rho(x) \, dx = 1 \).
\end{itemize}

We define the analytic representation as:
\[
\mathbb{N}_a(x) := 
\begin{cases}
  x, & \text{if } x \in \mathbb{N}, \\
  x - \frac{\sin(2\pi x)}{2\pi}, & \text{if } x \in \mathbb{R} \setminus \mathbb{N}.
\end{cases}
\]

Thus:
\[
\mathbb{N}_a(n) = \int_0^n \rho(x) \, dx = n - \frac{\sin(2\pi n)}{2\pi}, \quad \text{for } n \in \mathbb{R}.
\]

\subsection{Structural Properties}
\begin{itemize}
  \item \textbf{Injection:} The function \( x \mapsto x - \frac{\sin(2\pi x)}{2\pi} \) is strictly increasing, hence injective on \( \mathbb{R} \).
  \item \textbf{Order preservation:} For any \( x < y \), we have \( \mathbb{N}_a(x) < \mathbb{N}_a(y) \).
  \item \textbf{Approximate identity:} For all \( x \in \mathbb{R} \), \( \mathbb{N}_a(x) \approx x \) with bounded oscillation~\cite{fefferman1973pointwise}.
\end{itemize}

\subsection{Integral Multiplication Operator}
\label{subsec:periodic-multiplication}
The periodic kernel model requires a redefinition of multiplication that preserves analytic structure. Instead of direct multiplication:
\[
\mathbb{N}_a(x) \cdot \mathbb{N}_a(y) \ne \mathbb{N}_a(xy),
\]
we define the \emph{analytic product} \( x \star y \) via the double integral:
\[
\mathbb{N}_a(x \star y) := \iint_{[0,x] \times [0,y]} \rho(u)\rho(v) \, du \, dv.
\]
This definition satisfies:
\begin{itemize}
  \item \textbf{Commutativity:} \( x \star y = y \star x \),
  \item \textbf{Agreement with classical multiplication:} For \( x, y \in \mathbb{N} \),
  \[
  \mathbb{N}_a(x \star y) = x \cdot y,
  \]
  \item \textbf{Smoothness and extendability:} The operator is defined for all \( x, y \in \mathbb{R}^+ \), enabling interpolation and analysis.
\end{itemize}

\subsection{Amplitude-Parameterized Kernel Representation}
\label{subsec:parameterized-kernel}

To enable smooth control over the deviation between analytic and classical representations of numbers, we introduce a tunable parameter \( \alpha \in [0,1] \) into the periodic kernel.

\paragraph{Parameterized Kernel Function:}
\[
\rho_\alpha(x) := 1 - \alpha \cdot \cos(2\pi x)
\]
This retains periodicity, smoothness, and positivity while allowing the amplitude of oscillation to be modulated.

\paragraph{Parameterized Analytic Representation:}
\[
\mathbb{N}_a^{(\alpha)}(x) := \int_0^x \rho_\alpha(t)\,dt = x - \alpha \cdot \frac{\sin(2\pi x)}{2\pi}
\]

\begin{itemize}
  \item For \( \alpha = 1 \): this reduces to the original periodic model.
  \item For \( \alpha = 0 \): we recover the classical identity \( \mathbb{N}_a^{(0)}(x) = x \).
\end{itemize}

\begin{theorem}[Controlled Oscillation Bound]
Let \( \mathbb{N}_a^{(\alpha)}(x) = x - \alpha \cdot \frac{\sin(2\pi x)}{2\pi} \). Then for all \( x \in \mathbb{R} \),
\[
\left| \mathbb{N}_a^{(\alpha)}(x) - x \right| \le \frac{\alpha}{2\pi},
\]
with equality attained when \( \sin(2\pi x) = \pm 1 \), i.e., \( x \in \mathbb{Z} + \tfrac{1}{4} \).
\end{theorem}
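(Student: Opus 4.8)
The plan is to reduce the claim to the elementary estimate $|\sin|\le 1$. First I would form the difference directly from the definition of $\mathbb{N}_a^{(\alpha)}$: since $\mathbb{N}_a^{(\alpha)}(x) = \int_0^x \rho_\alpha(t)\,dt$ with $\rho_\alpha(t) = 1 - \alpha\cos(2\pi t)$, we have
\[
\mathbb{N}_a^{(\alpha)}(x) - x = \int_0^x \bigl(\rho_\alpha(t) - 1\bigr)\,dt = -\alpha \int_0^x \cos(2\pi t)\,dt = -\,\frac{\alpha\,\sin(2\pi x)}{2\pi},
\]
which also matches the closed form already recorded in the text. This identity is exact and holds for every $x\in\mathbb{R}$.

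Second, I would take absolute values. Because $\alpha\in[0,1]$ is nonnegative, $\bigl|\mathbb{N}_a^{(\alpha)}(x) - x\bigr| = \frac{\alpha}{2\pi}\,\bigl|\sin(2\pi x)\bigr| \le \frac{\alpha}{2\pi}$, using the universal bound $|\sin\theta|\le 1$. This already gives the stated inequality.

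Third, for the equality clause I would characterize the extremal points: equality holds precisely when $\bigl|\sin(2\pi x)\bigr| = 1$, i.e. when $2\pi x \in \frac{\pi}{2} + \pi\mathbb{Z}$, equivalently $x \in \frac14 + \frac12\mathbb{Z}$. The subfamily $x\in\mathbb{Z}+\frac14$ singled out in the statement is exactly the set where $\sin(2\pi x) = +1$, while the complementary shift $x\in\mathbb{Z}+\frac34$ gives $\sin(2\pi x) = -1$ and the same extremal absolute value; on this whole set the residual equals $\mp\,\alpha/(2\pi)$, so the bound is attained.

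There is essentially no obstacle here: the result is a one-line consequence of the closed-form expression for $\mathbb{N}_a^{(\alpha)}$ together with $|\sin|\le 1$, and the only point requiring a moment's care is recording the full extremal set $\frac14 + \frac12\mathbb{Z}$ rather than just $\mathbb{Z}+\frac14$. If one prefers to avoid invoking the closed form at all, the same bound follows from $\bigl|\int_0^x \cos(2\pi t)\,dt\bigr| = \bigl|\sin(2\pi x)\bigr|/(2\pi) \le 1/(2\pi)$ directly.
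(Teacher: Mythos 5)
Your proof is correct and is the natural (indeed essentially the only) argument: the identity $\mathbb{N}_a^{(\alpha)}(x)-x=-\alpha\sin(2\pi x)/(2\pi)$ plus $|\sin|\le 1$ gives the bound immediately, and the paper itself offers no separate proof beyond the stated formula. Your observation that the full extremal set is $\tfrac14+\tfrac12\mathbb{Z}$ (including $x\in\mathbb{Z}+\tfrac34$ where $\sin(2\pi x)=-1$), rather than only $\mathbb{Z}+\tfrac14$ as the theorem's parenthetical suggests, is a correct and worthwhile refinement of the statement.
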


\paragraph{Interpretation:}
This parameterization allows arbitrarily precise approximation of classical values by adjusting \( \alpha \). The function \( \mathbb{N}_a^{(\alpha)} \) interpolates between classical arithmetic and smooth analytic arithmetic, providing a continuum of representations with controllable fidelity.

\subsection{Multi-Parametric Kernel Generalization}

We may generalize the periodic kernel to a broader family:
\[
\rho_{\alpha,\beta}(x) := 1 - \alpha \cos(2\pi x) + \beta \sin(4\pi x),
\]
where \( \alpha, \beta \in \mathbb{R} \) control the amplitude and shape of oscillation.

\paragraph{Motivation:}
\begin{itemize}
  \item The additional term \( \beta \sin(4\pi x) \) enriches the harmonic structure,
  \item This allows finer control over the density distribution and approximation error,
  \item The resulting integral representation:
  \[
  \mathbb{N}_a^{(\alpha,\beta)}(x) := \int_0^x \rho_{\alpha,\beta}(t)\,dt
  \]
  can be tuned to reduce global deviation from the classical value \( x \).
\end{itemize}

\paragraph{Analytic Properties:}
\begin{itemize}
  \item For \( \alpha = 1, \beta = 0 \) we recover the original kernel,
  \item For \( \alpha = 0 \), all oscillation vanishes and \( \mathbb{N}_a^{(0,0)}(x) = x \),
  \item The function remains smooth, periodic, and bounded, enabling integration and analysis.
\end{itemize}

\paragraph{Future Direction:}
This framework opens the path to kernel optimization — selecting parameters \( (\alpha, \beta) \) that minimize deviation or maximize numerical smoothness across specific intervals.

\subsection{Generalized Fourier Kernel Expansion}
\label{subsec:fourier-kernel}

The kernel function \( \rho(x) \) may be expressed as a Fourier series:
\[
\rho(x) := a_0 + \sum_{k=1}^\infty a_k \cos(2\pi k x) + \sum_{k=1}^\infty b_k \sin(2\pi k x),
\]
where \( a_0, a_k, b_k \in \mathbb{R} \) are chosen to control the harmonic structure of the analytic representation.

\paragraph{Examples:}
\begin{itemize}
  \item Original model: \( a_0 = 1,\ a_1 = -1,\ a_{k>1} = b_k = 0 \),
  \item Multi-parametric model: \( a_0 = 1,\ a_1 = -\alpha,\ b_2 = \beta \),
  \item Smooth-decay model: \( a_k = -\alpha e^{-\lambda k},\ b_k = 0 \) for some \( \lambda > 0 \).
\end{itemize}

\paragraph{Analytic Representation:}
\begin{equation}
\mathbb{N}_a^{(a_k,b_k)}(x) := \int_0^x \rho(t)\,dt
= a_0 x + \sum_{k=1}^\infty \left[ \frac{a_k}{2\pi k} \sin(2\pi k x) - \frac{b_k}{2\pi k} (1 - \cos(2\pi k x)) \right]
\label{eq:analytic_representation}
\end{equation}

\paragraph{Interpretation:}
This expansion encodes each number as an accumulation of harmonic modes. Proper choice of \( a_k, b_k \) allows for precise control over smoothness, deviation, and geometric structure.

\subsection{Comparison with Local Bump Representation}

The development of a periodic analytic kernel provides a smooth and globally defined alternative to the original local bump model. While both approaches yield exact values on \( \mathbb{N} \), their structural, computational, and analytical properties differ significantly. The table below highlights key contrasts between the two representations, emphasizing the advantages of periodicity for interpolation, extension to \( \mathbb{R} \), and harmonic analysis.

\begin{center}
\renewcommand{\arraystretch}{1.2}
\begin{tabular}{@{} l l l @{}}
\toprule
\textbf{Property} & \textbf{Local Bump Model} & \textbf{Periodic Kernel Model} \\
\midrule
Exactness on \( \mathbb{N} \) & Yes & Yes \\
Smoothness & Local & Global \\
Interpolation of \( \mathbb{R} \) & No & Yes \\
Computational Simplicity & Low & High \\
Analytic Continuation & Discrete & Continuous \\
\bottomrule
\end{tabular}
\end{center}

\subsection{Illustrative Examples}

The following examples demonstrate how analytic representations approximate classical values. In each case, the deviation depends on the oscillatory amplitude parameter \( \alpha \). By tuning \( \alpha \to 0 \), the analytic value can be made arbitrarily close to the classical number.

\begin{itemize}
  \item For \( x = 1.5 \), using \( \alpha = 1 \):
  \[
  \mathbb{N}_a^{(1)}(1.5) = 1.5 - \frac{\sin(3\pi)}{2\pi} = 1.5,
  \]
  since \( \sin(3\pi) = 0 \). Here, the analytic and classical values coincide exactly.

  \item For \( x = 1.25 \), using \( \alpha = 1 \):
  \[
  \mathbb{N}_a^{(1)}(1.25) = 1.25 - \frac{\sin(2.5\pi)}{2\pi} \approx 1.409,
  \]
  while for \( \alpha = 0.1 \),
  \[
  \mathbb{N}_a^{(0.1)}(1.25) \approx 1.25 - 0.1 \cdot \frac{\sin(2.5\pi)}{2\pi} \approx 1.2659.
  \]
  Thus, the error reduces by a factor of 10.

  \item For rational multiplication \( x = \tfrac{1}{2}, y = \tfrac{1}{3} \), we define:
  \[
  \mathbb{Q}_a\left(\tfrac{1}{2} \star \tfrac{1}{3}\right) := \iint_{[0,\tfrac{1}{2}]\times[0,\tfrac{1}{3}]} \rho_\alpha(u)\rho_\alpha(v)\,du\,dv.
  \]
  For small \( \alpha \), we have:
  \[
  \mathbb{Q}_a\left(\tfrac{1}{2} \star \tfrac{1}{3}\right) \approx \tfrac{1}{6} - \mathcal{O}(\alpha),
  \]
  and the error vanishes as \( \alpha \to 0 \).

  \item For \( x = \tfrac{7}{10} \), \( \alpha = 0.05 \):
  \[
  \mathbb{Q}_a^{(0.05)}\left(\tfrac{7}{10}\right) = \tfrac{7}{10} - 0.05 \cdot \frac{\sin(1.4\pi)}{2\pi} \approx 0.7005,
  \]
  with absolute error less than \( 0.001 \).
\end{itemize}

These examples show that the analytic model preserves arithmetic values exactly on \( \mathbb{N} \), and can approximate rationals arbitrarily well via amplitude control.

\section{Series Representations of Integers and Rationals}
\label{sec:series-representation}

In the periodic kernel model, each natural and rational number admits a convergent series representation based on scaled and nested integrals of the kernel function \( \rho(x) = 1 - \cos(2\pi x) \). This section formalizes this interpretation.

\subsection{Analytic Integers as Series of Fractions}
\begin{proposition}
Let \( n \in \mathbb{N} \). Then:
\[
\mathbb{N}_a(n) = \lim_{m \to \infty} \sum_{k=1}^{n} \sum_{j=1}^{m} \int_{\frac{j-1}{m}}^{\frac{j}{m}} \rho\left(x + \tfrac{k-1}{1}\right) \, dx.
\]
Equivalently:
\[
\mathbb{N}_a(n) = \lim_{m \to \infty} \sum_{k=1}^{nm} \int_{\frac{k-1}{m}}^{\frac{k}{m}} \rho(x) \, dx = \lim_{m \to \infty} n \cdot m \cdot \int_0^{1/m} \rho(x)\,dx.
\]
Since \( \rho \) is periodic with unit integral over \([0,1]\), this gives:
\[
\mathbb{N}_a(n) = n.
\]
\end{proposition}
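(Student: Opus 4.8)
The plan is to recognize that, despite the appearance of a limit, the identity is at bottom a statement about finite additivity of the integral over a partition combined with the unit-period normalization of $\rho$, so that no genuine limiting estimate is needed. First I would observe that for each fixed $k$ the points $\{j/m : j = 0,\dots,m\}$ subdivide $[0,1]$, so by additivity $\sum_{j=1}^{m}\int_{(j-1)/m}^{j/m}\rho(x+(k-1))\,dx = \int_0^1 \rho(x+(k-1))\,dx$ exactly, for \emph{every} $m$; then periodicity $\rho(x+(k-1)) = \rho(x)$ and the normalization $\int_0^1\rho = 1$ give the value $1$ for each $k$, and summing over $k = 1,\dots,n$ yields $n$ with no $m$-dependence at all, so the limit is trivially $n$.

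For the ``equivalently'' reformulation I would substitute $u = x+(k-1)$ in each inner integral and reindex by $\ell = (k-1)m + j$; as $k$ runs over $1,\dots,n$ and $j$ over $1,\dots,m$ this is a bijection onto $\ell = 1,\dots,nm$, and the window $[(j-1)/m,j/m]$ shifted by $k-1$ becomes exactly $[(\ell-1)/m,\ell/m]$. Since these $nm$ abutting intervals tile $[0,n]$, additivity collapses the sum to $\int_0^n \rho(u)\,du$, which equals $n$ either by periodicity ($\int_0^n\rho = n\int_0^1\rho$) or directly from the closed form $\int_0^n\rho = n - \frac{\sin(2\pi n)}{2\pi} = n$, using $\sin(2\pi n) = 0$ for integer $n$.

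The one step I would flag as the genuine pitfall is the final rewriting as $n\,m\int_0^{1/m}\rho(x)\,dx$: this is \emph{not} valid for finite $m$, because $\rho$ has period $1$, not $1/m$, so $\int_{(k-1)/m}^{k/m}\rho$ truly depends on $k \bmod m$ and cannot be factored out; moreover $\rho(0) = 2\sin^2(0) = 0$ forces $m\int_0^{1/m}\rho \to 0$, so that quantity tends to $0$ rather than to $n$. I would therefore replace that chain by the correct factorization $\sum_{\ell=1}^{nm}\int_{(\ell-1)/m}^{\ell/m}\rho = n\sum_{j=1}^{m}\int_{(j-1)/m}^{j/m}\rho = n\int_0^1\rho = n$, which uses only that advancing the index by $m$ advances the integration window by one full period. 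With this correction the proposition is immediate: there is no convergence rate, mollification, or oscillation control to manage — the entire content is the interplay of partition-additivity with the unit-period normalization of $\rho$.
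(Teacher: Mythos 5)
Your argument is correct and follows essentially the same route the paper intends: partition additivity collapses the inner sum exactly for every finite $m$, periodicity reduces each shifted block to $\int_0^1\rho=1$, and the limit is vacuous. Your reindexing $\ell=(k-1)m+j$ onto the tiling of $[0,n]$ is exactly the passage to the paper's second display.

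You are also right to flag the final link in the paper's chain as erroneous, and this is worth stating plainly: the equality
\[
\sum_{k=1}^{nm}\int_{\frac{k-1}{m}}^{\frac{k}{m}}\rho(x)\,dx \;=\; n\cdot m\cdot\int_0^{1/m}\rho(x)\,dx
\]
would require $\rho$ to have period $1/m$, which it does not; since $\rho(0)=0$, one computes $m\int_0^{1/m}\rho(x)\,dx = 1-\tfrac{m\sin(2\pi/m)}{2\pi}\to 0$, so the paper's right-hand limit is $0$, not $n$. Your replacement, grouping the $nm$ windows into $n$ blocks of $m$ consecutive windows (each block spanning one full period) so that the sum factors as $n\int_0^1\rho=n$, is the correct repair and is what the proposition actually needs. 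Net: your proof is sound, matches the paper's intended mechanism, and corrects a genuine defect in the stated chain of equalities.
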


\paragraph{Remark:}
This expresses each analytic integer as the limit of a uniform sum of \( nm \) equally sized analytic fractions of size \( \mathbb{Q}_a\left(\tfrac{1}{m}\right) \). The convergence holds as \( m \to \infty \).

\subsection{Analytic Rationals as Series of Series}
\begin{proposition}
Let \( \tfrac{p}{q} \in \mathbb{Q} \). Then:
\[
\mathbb{Q}_a\left(\tfrac{p}{q}\right) = \sum_{k=1}^p \int_0^{1/q} \rho(x) \, dx.
\]
Each component \( \mathbb{Q}_a\left(\tfrac{1}{q}\right) \) admits the nested representation:
\[
\mathbb{Q}_a\left(\tfrac{1}{q}\right) = \lim_{m \to \infty} \sum_{j=1}^m \int_{\frac{j-1}{mq}}^{\frac{j}{mq}} \rho(x) \, dx.
\]
Thus:
\[
\mathbb{Q}_a\left(\tfrac{p}{q}\right) = \lim_{m \to \infty} \sum_{k=1}^p \sum_{j=1}^m \int_{\frac{j-1}{mq}}^{\frac{j}{mq}} \rho(x) \, dx.
\]
This constructs analytic rationals as double series of increasingly refined integral fragments.
\end{proposition}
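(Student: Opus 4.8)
The plan is to reduce the double-series identity to two ingredients already latent in the periodic-kernel construction: additivity of the analytic representation over unit fractions, and exact recovery of a definite integral of $\rho$ by summation over an equipartition of the integration interval. The overall structure mirrors the proof of the preceding proposition for analytic integers, with the outer index $k$ now running only to $p$ and the inner index $j$ refining the interval $[0,1/q]$ instead of $[0,1]$.

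First I would establish the single-sum identity $\mathbb{Q}_a(p/q) = \sum_{k=1}^{p}\int_0^{1/q}\rho(x)\,dx$. By the linearity of the analytic representation (Axiom~\ref{ax:linearity-inversion}), transported to the periodic model, $\mathbb{Q}_a(p/q) = p\,\mathbb{Q}_a(1/q)$, so it suffices to identify the unit-fraction block $\mathbb{Q}_a(1/q)$ with the kernel integral $\int_0^{1/q}\rho(x)\,dx$. This is the step requiring the most care, and I expect it to be \emph{the main obstacle}: the naive evaluation $\int_0^{1/q}\rho(x)\,dx = \tfrac1q - \tfrac{\sin(2\pi/q)}{2\pi}$ does not literally equal $1/q$, because $1/q$ is not an integer and the oscillatory correction does not vanish. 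The resolution I would adopt is to read the displayed integral through the amplitude-scaling convention already used for rationals in Section~\ref{subsec:rationals-division}: just as $\phi$ is there replaced by the scaled bump $\phi^{(1/q)}=\tfrac1q\phi$, the periodic kernel on the shortened interval is the rescaled kernel $\rho(qx)$, for which the change of variables $u=qx$ gives
\[
\int_0^{1/q}\rho(qx)\,dx \;=\; \frac1q\int_0^{1}\rho(u)\,du \;=\; \frac1q
\]
by the normalization $\int_0^1\rho=1$. With this reading the correction term is absorbed and $\mathbb{Q}_a(1/q)=\int_0^{1/q}\rho(x)\,dx$ holds exactly; I would state the substitution explicitly so the notation is unambiguous.

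Next I would prove the nested representation of the unit fraction. For each fixed $m$ the intervals $\big[\tfrac{j-1}{mq},\tfrac{j}{mq}\big]$, $j=1,\dots,m$, partition $[0,1/q]$ into $m$ abutting subintervals of equal length $\tfrac1{mq}$; hence by finite additivity of the integral over such a partition,
\[
\sum_{j=1}^{m}\int_{\frac{j-1}{mq}}^{\frac{j}{mq}}\rho(x)\,dx \;=\; \int_0^{1/q}\rho(x)\,dx \;=\; \mathbb{Q}_a\!\left(\tfrac1q\right)
\]
for every $m$. The sequence is therefore constant, so its limit as $m\to\infty$ equals the same value; if a genuine limit is preferred over a constant sequence, the identical conclusion follows from uniform continuity of $\rho$ on $[0,1/q]$ and the standard Riemann-sum bound $\big|\sum_j\rho(\xi_j)\tfrac1{mq}-\int_0^{1/q}\rho\big|\le \tfrac1q\,\omega_\rho(\tfrac1{mq})\to 0$, with $\omega_\rho$ the modulus of continuity of $\rho$.

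Finally I would assemble the pieces: substituting the nested form of $\mathbb{Q}_a(1/q)$ into $\mathbb{Q}_a(p/q)=\sum_{k=1}^{p}\mathbb{Q}_a(1/q)$ and commuting the finite, $m$-independent sum over $k$ with the limit in $m$ yields
\[
\mathbb{Q}_a\!\left(\tfrac{p}{q}\right) \;=\; \lim_{m\to\infty}\;\sum_{k=1}^{p}\sum_{j=1}^{m}\int_{\frac{j-1}{mq}}^{\frac{j}{mq}}\rho(x)\,dx,
\]
which is the assertion. Apart from pinning down the normalization convention in the first step, everything reduces to exact partition bookkeeping and the interchange of a finite sum with a limit, both routine.
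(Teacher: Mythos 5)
The paper offers no explicit proof of this proposition --- the three displays are asserted as a chain, with the justification implicit. Your core argument supplies exactly that implicit content and is correct: for each fixed $m$ the intervals $\bigl[\tfrac{j-1}{mq},\tfrac{j}{mq}\bigr]$ equipartition $[0,1/q]$, so finite additivity of the integral makes the inner sum equal to $\int_0^{1/q}\rho(x)\,dx$ for \emph{every} $m$; the sequence is constant, the limit is trivial, and the finite outer sum over $k$ commutes with it. This is the same route the paper takes for the preceding integer proposition (where the sum over $nm$ subintervals of $[0,n]$ collapses by periodicity), specialized to the interval $[0,1/q]$.

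Where you depart from the paper is the first step, and the departure is unnecessary and slightly hazardous. You treat the discrepancy $\int_0^{1/q}\rho(x)\,dx=\tfrac1q-\tfrac{\sin(2\pi/q)}{2\pi}\neq\tfrac1q$ as ``the main obstacle'' and repair it by silently replacing the integrand with the rescaled kernel $\rho(qx)$. But the proposition never claims the value equals $p/q$; in the periodic submodel the analytic value of a non-integer carries exactly this bounded oscillatory correction (cf.\ the approximate-identity property and the worked example $\mathbb{Q}_a^{(0.05)}(7/10)\approx 0.7005$), and the first display is best read as the \emph{defining} construction of the analytic rational as $p$ copies of the unit-fraction block, mirroring Axiom~\ref{ax:rationals}. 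Your rescaling changes the integrand in all three displayed formulas, so if taken literally you would be proving a different statement. A secondary caution: deriving $\mathbb{Q}_a(p/q)=p\,\mathbb{Q}_a(1/q)$ from Axiom~\ref{ax:linearity-inversion} sits uneasily with the periodic-model formula $\mathbb{N}_a(x)=\int_0^x\rho$, since $\int_0^{p/q}\rho\neq p\int_0^{1/q}\rho$ in general ($\sin$ is not additive); the consistent reading is that the first display is the definition of $\mathbb{Q}_a(p/q)$ in this submodel, after which your partition-and-limit argument finishes the proof with no normalization repair needed.
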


\subsection{Discrete Kernel Summation}
The integral fragments above can be expressed without explicit integrals by using the antiderivative of \( \rho(x) \):
\[
R(x) := \int \rho(x) \, dx = x - \frac{\sin(2\pi x)}{2\pi}.
\]
Then:
\[
\int_{a}^{b} \rho(x) \, dx = R(b) - R(a).
\]
Applying this to each term of the sum:
\[
\int_{\frac{k-1}{m}}^{\frac{k}{m}} \rho(x) \, dx = R\left(\tfrac{k}{m}\right) - R\left(\tfrac{k-1}{m}\right) = \tfrac{1}{m} - \frac{1}{2\pi} \left( \sin\left(\tfrac{2\pi k}{m}\right) - \sin\left(\tfrac{2\pi(k - 1)}{m}\right) \right).
\]
Therefore:
\[
\mathbb{N}_a(n) = \sum_{k=1}^{nm} \left[ \tfrac{1}{m} - \frac{1}{2\pi} \left( \sin\left(\tfrac{2\pi k}{m}\right) - \sin\left(\tfrac{2\pi(k - 1)}{m}\right) \right) \right].
\]

\paragraph{Why this matters:}
This discrete formula provides an integral-free expression for approximating \( \mathbb{N}_a(n) \) via a sum of trigonometric evaluations. It allows the smooth analytic representation to be computed numerically using only \( \sin(2\pi x) \), without needing explicit integration. Moreover, it opens the door to interpreting arithmetic operations through the lens of signal processing — viewing natural numbers as discrete accumulations of harmonic modes.

\subsection{Interpretation and Structure}
\begin{itemize}
  \item Natural numbers are built from uniform sums of small analytic pieces.
  \item Rational numbers require a two-level approximation hierarchy — a series of scaled series.
  \item Discrete evaluation formulas provide integral-free constructions based on \( \sin(2\pi x) \).
\end{itemize}

\begin{remark}
This view reinforces the interpretation of arithmetic as a limit process over continuous geometry: integers and rationals are recoverable from local accumulations of analytic mass. This forms the basis for smooth arithmetic encoding.
\end{remark}

\subsection{Generalized Discrete Approximation via Fourier Kernels}
\label{subsec:generalized-discrete-fourier}

The earlier discrete formula for approximating analytic integers via trigonometric differences was derived from the specific periodic kernel \( \rho(x) = 1 - \cos(2\pi x) \). In this section, we generalize that construction to an arbitrary smooth periodic kernel expressed via a Fourier series~\cite{stein2003fourier,gowers2001fourier}, yielding a family of discrete approximations based on tunable harmonic modes.

\paragraph{Motivation:}
Rather than committing to a fixed kernel, we now consider a general smooth periodic function:
\[
\rho(x) = a_0 + \sum_{k=1}^\infty \left( a_k \cos(2\pi k x) + b_k \sin(2\pi k x) \right),
\]
which satisfies normalization \( \int_0^1 \rho(x)\,dx = a_0 \), smoothness, and periodicity.

We aim to approximate the integral:
\[
\mathbb{N}_a^{(a_k,b_k)}(x) = \int_0^x \rho(t)\,dt
\]
using discrete sums over intervals of width \( \frac{1}{m} \), where \( m \in \mathbb{N} \) is a refinement parameter.

\paragraph{Step 1: Integral Partitioning}
We divide the interval \( [0, x] \) into \( \lfloor mx \rfloor \) subintervals:
\[
\int_0^x \rho(t)\,dt \approx \sum_{j=1}^{\lfloor mx \rfloor} \int_{\frac{j-1}{m}}^{\frac{j}{m}} \rho(t)\,dt.
\]

\paragraph{Step 2: Term-wise Evaluation of Harmonic Components}
Each term \( \int_{\frac{j-1}{m}}^{\frac{j}{m}} \rho(t)\,dt \) is evaluated component-wise:
\[
\int_{\frac{j-1}{m}}^{\frac{j}{m}} \rho(t)\,dt = \tfrac{a_0}{m}
+ \sum_{k=1}^\infty \left( I_{j,k}^{\cos} + I_{j,k}^{\sin} \right),
\]
where
\[
I_{j,k}^{\cos} := \frac{a_k}{2\pi k} \left[ \sin\left( \tfrac{2\pi k j}{m} \right) - \sin\left( \tfrac{2\pi k (j-1)}{m} \right) \right],
\]
\[
I_{j,k}^{\sin} := -\frac{b_k}{2\pi k} \left[ \cos\left( \tfrac{2\pi k j}{m} \right) - \cos\left( \tfrac{2\pi k (j-1)}{m} \right) \right].
\]

\paragraph{Step 3: General Discrete Formula}

Combining all terms, we obtain the general discrete approximation of the analytic representation. For compactness, define
\[
\Delta^{\sin}_{k,j} := \sin\left( \frac{2\pi k j}{m} \right) - \sin\left( \frac{2\pi k (j-1)}{m} \right), \qquad
\Delta^{\cos}_{k,j} := \cos\left( \frac{2\pi k j}{m} \right) - \cos\left( \frac{2\pi k (j-1)}{m} \right).
\]

Then the cumulative approximation becomes:
\begin{align}
\mathbb{N}_a^{(a_k,b_k)}(x) \approx \sum_{j=1}^{\lfloor mx \rfloor} \Bigg[
\frac{a_0}{m}
+ \sum_{k=1}^\infty \left(
\frac{a_k}{2\pi k} \Delta^{\sin}_{k,j}
- \frac{b_k}{2\pi k} \Delta^{\cos}_{k,j}
\right)
\Bigg]
\label{eq:discrete_approximation}
\end{align}

\paragraph{Special Case: \( \rho(x) = 1 - \cos(2\pi x) \)}
For this kernel, we have \( a_0 = 1 \), \( a_1 = -1 \), and all other coefficients zero. Substituting yields:
\[
\mathbb{N}_a(n) = \sum_{j=1}^{nm} \left[ \tfrac{1}{m} - \frac{1}{2\pi} \left( \sin\left(\tfrac{2\pi j}{m}\right) - \sin\left(\tfrac{2\pi(j - 1)}{m}\right) \right) \right],
\]
which matches the original discrete formula.

\paragraph{Interpretation:}
This unified discrete framework shows that all smooth kernel-based representations can be approximated to arbitrary precision using weighted trigonometric differences. The harmonic coefficients \( a_k, b_k \) define the shape of the kernel, and their decay controls the smoothness and convergence rate of the approximation.

\subsection{Integral vs. Discrete Form: Two Views of Accumulation}

The two central formulas, the integral form~\eqref{eq:analytic_representation} and the discrete form~\eqref{eq:discrete_approximation}, represent the same analytical function \( \mathbb{N}_a^{(a_k,b_k)}(x) \), but in two fundamentally different ways — one as a continuous integral expression, the other as a discrete sum over intervals.

\begin{center}
\renewcommand{\arraystretch}{1.2}
\begin{tabular}{@{} l l l @{}}
\toprule
\textbf{Property} & \textbf{Integral Form} & \textbf{Discrete Form} \\
\midrule
Accumulation & Continuous over \([0,x]\) & Step-wise over \([j/m, (j+1)/m]\) \\
Spectral Terms & Act globally & Act locally \\
Evaluation & Exact trigonometric integrals & Finite sums of increments \\
Convergence & Smooth for analytic \( \rho(t) \) & Uniform as \( m \to \infty \) \\
Computational Use & Theoretical, symbolic & Practical, numeric \\
\bottomrule
\end{tabular}
\end{center}

\medskip

Together, these views offer a dual lens on arithmetic accumulation: one harmonic and smooth, the other stepwise and computational. They are two faces of the same function — a continuous waveform and its discrete approximation.

\section*{Conclusion: The Wave Duality of Numbers}

This work began as a constructive framework for analytical representations of numbers via periodic kernels and integral approximations. But beyond the formal results, a deeper structure begins to emerge — one in which numbers behave not merely as discrete entities, but as \emph{waveforms}: superpositions of oscillating harmonics, accumulating into scalar identities.

\subsection*{Numbers as Waveforms}

In this framework, a number is not an atomic label but a \emph{collapsed state} of an underlying \emph{interference process}. It arises from the accumulation of harmonic contributions — cosine and sine components with carefully tuned amplitudes and phases — integrated over time or space.

We propose a dualistic interpretation:

\begin{center}
\renewcommand{\arraystretch}{1.2}
\begin{tabular}{@{} l l @{}}
\toprule
\textbf{Classical Number Concept} & \textbf{Wave-Based Interpretation} \\
\midrule
Static discrete value & Interfering waveform accumulated over interval \\
Finite symbol or quantity & Limit point of oscillatory convergence \\
Addition & Linear superposition of coherent waves \\
Multiplication & Spectral convolution or frequency modulation \\
Prime number & Phase discontinuity in harmonic structure \\
Integer \( n \) & Resonant frequency with constructive alignment \\
Zero & Total phase cancellation (null interference) \\
Numerical identity & Emergent coherence in global spectrum \\
\bottomrule
\end{tabular}
\end{center}

This perspective suggests that to be a number is to be a wave with memory — a shape encoded in the periodic kernel \( \rho(t) \), which determines the nature of accumulation in \( \mathbb{N}_a(x) \).

\subsection*{From Bumps to Spectra: Four Views of a Number}

This work has developed a progression of increasingly abstract representations of natural numbers, moving from concrete geometric constructs to fully analytical spectral forms. To conclude, we summarize these four perspectives using the number \( 3 \) as an example:

\begin{center}
\renewcommand{\arraystretch}{1.2}
\begin{tabular}{@{} l p{9cm} @{}}
\toprule
\textbf{Representation} & \textbf{Interpretation of the Number 3} \\
\midrule
\textbf{Bump Model} &
Three localized smooth "bump" functions, each supported on disjoint intervals. The number is the integral of their sum, i.e., three distinct localized accumulations. \\
\textbf{Single-Wave Approximation} &
Each bump is approximated by a single sine function over its interval. The number becomes a sum of aligned wavelets, revealing early signs of interference. \\
\textbf{Basic Harmonic Pair} &
Each wavelet is replaced with a pair of sine and cosine terms with specific coefficients \( a_k, b_k \). The number emerges as a point of constructive overlap — a coherent combination of basic harmonics. \\
\textbf{Fourier Spectral Sum} &
The number is defined globally as the integral of a smooth kernel \( \rho(t) \), expressed as an infinite Fourier series. Here, \( 3 \) is not just a count but a \emph{resonant configuration} within a continuous spectrum. \\
\bottomrule
\end{tabular}
\end{center}

This sequence reflects the shift from discrete to continuous, from local to global, from symbolic quantity to spectral identity. What begins as “three bumps” becomes, ultimately, a harmonic echo of the number's place in the analytic continuum.

\subsection*{Toward a Formal Wave Arithmetic}

To transform this vision into a mathematically productive theory, the following foundational notions must be made precise:

\begin{itemize}
    \item \textbf{Resonance and Integer Identity:} We conjecture that integer values correspond to phase-aligned points in the cumulative signal — local maxima of constructive interference. A formal definition would involve identifying resonant frequencies and their integral alignment in the spectrum of \( \rho(t) \).
    
    \item \textbf{Phase Anomalies and Primality:} Prime numbers appear as ``phase anomalies'' — positions where expected spectral alignment is disrupted. This can potentially be encoded through irregularities in the regular frequency response of \( \mathbb{N}_a(x) \), and formalized via deviation from smooth spectral profiles.
    
    \item \textbf{Spectral Modulation as Multiplication:} The operation of multiplication, when interpreted in frequency space, corresponds to convolution or modulation of spectral components. For example, the product \( ab \) may be viewed as the folding of the spectral signature of \( \rho_a(t) \) with \( \rho_b(t) \).
    
    \item \textbf{Nonlocality of Numbers:} A number in this system is never purely local. Its representation requires an infinite envelope of harmonic contributors. This introduces nonlocal features into numerical identity and challenges the pointwise notion of value.
    
    \item \textbf{Harmonic Memory:} The kernel \( \rho(t) \) encodes a form of memory — its Fourier coefficients determine how past and future points affect accumulation. This memory is functional, not symbolic: it shapes the dynamic path toward number formation.
\end{itemize}

\subsection*{Resonance, Primality, and Smooth Spectral Suppression}

The wave-dual model of numbers finds a striking resonance with recent analytical constructions of smooth primality approximations. In particular,~\cite{semenov2025smooth} introduces a $C^\infty$ real-valued function \( P(n) \in [0,1] \) that approximates the prime indicator function through a triple integral over smooth kernels. The key idea is that divisibility corresponds to alignment — when \( n/m \in \mathbb{Z} \), a periodic kernel sharply suppresses the resulting amplitude.

This mirrors the wave-arithmetic perspective, where compositeness emerges from resonance collapse: a number divisible by others resonates destructively under specific harmonic ratios. Conversely, primality appears as spectral isolation — a frequency signature that fails to align with any lower divisor harmonics.

While~\cite{semenov2025smooth} treats primality analytically through smooth kernel suppression, our current model interprets numerical identity as the cumulative interference of constructive waveforms. The two perspectives converge in the notion of \emph{resonant structure}: that arithmetic properties are not merely discrete attributes but arise from the topology of interference in a smooth analytical medium.

This conceptual bridge suggests a broader paradigm: arithmetic as wave structure, and divisibility as resonance. It opens the possibility that primality — and perhaps other numerical properties — can be reframed as \emph{spectral anomalies} within harmonic systems.

\subsection*{Directions for Future Work}

To fully realize this perspective, we identify several concrete research directions:

\begin{itemize}
    \item Analytical characterization of phase anomalies as spectral markers of primality.
    \item Development of a formal ``wave arithmetic'' where numerical operations are defined on spectral spaces.
    \item Classification of number systems through families of harmonic kernels.
    \item Definition of spectral distances between numbers and their metric topology.
    \item Application of harmonic analysis, microlocal analysis, and dynamical systems to wave-based number theory.
    \item Interpretation of numerical emergence as resonance in quasi-periodic systems.
\end{itemize}

\subsection*{Closing Reflection}

This wave-based reinterpretation does not seek to replace the classical view of numbers, but rather to reveal a hidden structure: that numerical identity may be a phenomenon of accumulation, not a static datum. In this vision, the number is not a mark — it is an echo.

The path forward lies in grounding this idea with analytical rigor. But what this chapter opens is not merely a method — it is a paradigm.

\begin{center}
\emph{Let us now pass from oscillation to structure.}
\end{center}


\clearpage
\appendix
\section*{Appendix: Python Implementation}

The following Python function implements the discrete approximation of the analytic representation \( \mathbb{N}_a^{(a_k,b_k)}(x) \) based on a finite Fourier kernel expansion, as derived in Section~\ref{subsec:generalized-discrete-fourier}.

This code allows for practical validation of the series-based integral approximation, confirming that the discrete formula converges to the expected arithmetic values when applied to known kernels (e.g., \( \rho(x) = 1 - \cos(2\pi x) \)). However, its purpose is broader than numerical verification: it provides a foundation for modeling more complex, constructively defined operations on numbers—such as wave-based primality diagnostics, spectral sieves, and dynamic arithmetic encodings based on parameterized harmonic structures.

\bigskip

\noindent\textbf{Python implementation:}
\begin{verbatim}
import numpy as np

def wave_arithmetic_approx(x, m, N, a_coeffs, b_coeffs):
    """
    Approximate N_a^{(a_k, b_k)}(x) using a Fourier kernel expansion.

    Parameters:
        x        : float   — argument (real number)
        m        : int     — subdivision resolution (intervals per unit)
        N        : int     — number of harmonics (max k in the sum)
        a_coeffs : list    — Fourier cosine coefficients a_k
        b_coeffs : list    — Fourier sine coefficients b_k

    Returns:
        float — approximation of N_a^{(a_k, b_k)}(x)
    """
    total = 0.0
    j_max = int(np.floor(m * x))
    for j in range(1, j_max + 1):
        term = a_coeffs[0] / m
        for k in range(1, N + 1):
            coeff = 1 / (2 * np.pi * k)
            sin_diff = (
                np.sin(2 * np.pi * k * j / m)
                - np.sin(2 * np.pi * k * (j - 1) / m)
            )
            cos_diff = (
                np.cos(2 * np.pi * k * j / m)
                - np.cos(2 * np.pi * k * (j - 1) / m)
            )
            term += coeff * (
                a_coeffs[k] * sin_diff
                - b_coeffs[k] * cos_diff
            )        total += term
    return total
\end{verbatim}

\bigskip

\noindent\textbf{Example:}

To evaluate \( \mathbb{N}_a(x) \) for the standard kernel \( \rho(x) = 1 - \cos(2\pi x) \), use:
\begin{verbatim}
m = 100
N = 10
x = 2.5

a_coeffs = [1.0] + [-1.0] + [0.0] * (N - 1)
b_coeffs = [0.0] * (N + 1)

result = wave_arithmetic_approx(x, m, N, a_coeffs, b_coeffs)
\end{verbatim}

This confirms that the analytic representation of numbers can be faithfully approximated using a finite series of harmonic components. Furthermore, modifying the Fourier coefficients \( a_k, b_k \) enables modeling of extended arithmetic systems, wave-based encodings, and constructive simulations of analytic number-theoretic phenomena.

\end{document}